\documentclass[11pt,a4paper]{article}
\usepackage[T1]{fontenc}
\usepackage{lmodern}
\usepackage[margin=27mm]{geometry}
\usepackage{amsmath,amssymb,amsthm,microtype}
\usepackage{enumitem}
\usepackage{hyperref}
\hypersetup{colorlinks=true,linkcolor=blue,citecolor=blue,urlcolor=blue,
 pdftitle={A Countable Three-Generated Orthomodular Lattice with No Sigma-Complete Embedding},
 pdfauthor={Joaquim Reizi Higuchi}}
\setlist{itemsep=2pt,topsep=4pt}
\numberwithin{equation}{section}
\newtheorem{theorem}{Theorem}[section]
\newtheorem{lemma}[theorem]{Lemma}
\newtheorem{remark}[theorem]{Remark}
\newcommand{\R}{\mathbb R}
\newcommand{\N}{\mathbb N}
\newcommand{\spn}{\operatorname{span}}
\newcommand{\gr}{\operatorname{graph}}
\newcommand{\id}{\operatorname{id}}
\newcommand{\cc}{\mathrel{\mathsf C}}
\title{A Countable Three-Generated Orthomodular Lattice\\
with No $\sigma$-Complete Embedding}
\author{Joaquim Reizi Higuchi\\[4pt]
\small Graduate School, Program in Natural Sciences\\
\small The Open University of Japan, Chiba, Japan\\
\small \href{mailto:joaquim_reizi@icloud.com}{\texttt{joaquim\_reizi@icloud.com}}\\
\small ORCID: \href{https://orcid.org/0009-0006-1392-3770}{0009-0006-1392-3770}}
\date{10 October 2026\\[3pt]\small Preprint draft, version 3}

\begin{document}
\maketitle
\begin{abstract}
We give an explicit countably infinite orthomodular lattice generated by
three specified elements and a direct argument that it admits no injective
homomorphism into any $\sigma$-complete orthomodular lattice. In particular,
it does not embed into any set-sized complete orthomodular lattice.
The source is a generated subalgebra of a cofinite reduced product of
finite-dimensional real subspace lattices. A nonzero finite term $a$ is
orthogonal to the orbit of a terminal line under every finite word in three
Sasaki projections, while another finite term $X$ satisfies
$\pi_a\pi_X(t)=a$. If the image of this countable orbit has a supremum in a
target orthomodular lattice, Sasaki adjunction forces that supremum to
commute with the generator images. This forces the image of $a$ to vanish.
The proof imposes no atomicity, modularity, Hilbert representation,
cardinality bound on the target, or preservation of source infinitary
operations. All target lemmas are derived from the orthomodular axioms.
\end{abstract}

\begin{center}
\small\textbf{Verification scope.} The paper argument has received internal
AI paper reconstruction. It has not received formal verification or
independent external review. Novelty and priority remain unestablished.
\end{center}

\section{Introduction}
The question whether every orthomodular lattice embeds into a complete
orthomodular lattice is Problem~1 in Harding and
Wang~\cite[Section~3]{HardingWang2021v1}; their Problem~2 asks about
$\sigma$-complete targets. An embedding here is an injective map preserving
the bounded ortholattice signature
\[
                        0,\quad1,\quad\wedge,\quad\vee,\quad{}'.
\]
It need not preserve infinite joins already present in its source.

\begin{theorem}[Main result]\label{thm:main}
In ZFC there is a set-sized, countably infinite orthomodular lattice $L$,
generated by three specified elements, with no embedding into any
set-sized $\sigma$-complete orthomodular lattice. Consequently, $L$ has
no embedding into any set-sized complete orthomodular lattice.
\end{theorem}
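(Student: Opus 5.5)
The plan follows the outline in the abstract. I construct $L$ concretely, isolate two finite terms with incompatible properties, and show that any $\sigma$-complete target forces the image of one of them to vanish.

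\emph{Construction of $L$.} Take a sequence of finite dimensions $n_k\to\infty$ and let $P=\prod_k \mathcal L(\R^{n_k})$. Quotient by the congruence identifying sequences that agree for all but finitely many $k$. This cofinite reduced product is again orthomodular, since it is a quotient of a product of orthomodular lattices by a congruence given by a filter of coordinates. In each $\R^{n_k}$ fix three subspaces $e_k,f_k,g_k$ and a line $t_k$ (the ``terminal line''), chosen uniformly in $k$ so that a fixed finite term in $e,f,g$ evaluates to $t$ in almost every coordinate. Let $L$ be the subalgebra generated by the classes $e,f,g$. It is countable because it is generated by finitely many elements; it is infinite once some terms are pairwise distinct modulo cofinite equality. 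Write $\pi_x(y)=x\wedge(x'\vee y)$ for the Sasaki projection. The coordinates are designed so that two properties hold.
\begin{enumerate}
\item[(i)] There is a nonzero term $a$ with $a\perp w(t)$ for every finite word $w$ in $\pi_e,\pi_f,\pi_g$. In each coordinate the orbit of $t_k$ spans a proper subspace that misses $a_k$ only at the ``far end'', and this far end escapes to infinity as $k$ grows. So $a$ is orthogonal to every orbit element modulo cofinitely many coordinates, but not uniformly.
\item[(ii)] There is a term $X$ with $\pi_a\pi_X(t)=a$.
\end{enumerate}
Both are finite-dimensional linear-algebra checks on explicit matrices, made coordinatewise for large $k$.

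\emph{Target argument.} Suppose $\varphi:L\to M$ is an embedding into a $\sigma$-complete OML. The orbit $O=\{w(t)\}$ is countable, so $s=\bigvee\varphi(O)$ exists in $M$. Since $\varphi$ preserves Sasaki projections, $\varphi(O)$ is closed under $\pi_{\varphi e},\pi_{\varphi f},\pi_{\varphi g}$. Now use the Sasaki adjunction, $\pi_x(y)\le z\iff y\le x'\vee (x\wedge z)$, which I derive directly from the orthomodular law. From it, $\pi_x$ preserves all existing joins, because it is a left adjoint. Hence $\pi_x(s)=\bigvee\pi_x(\varphi O)\le s$ for each generator image $x$. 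In an OML, $\pi_x(s)\le s$ implies that $x$ commutes with $s$; this is a standard lemma, which I prove via the Foulis--Holland-style identity $\pi_x(s)\le s\Rightarrow x\wedge(x'\vee s)=x\wedge s$. So $s$ commutes with $\varphi e,\varphi f,\varphi g$. The set of elements commuting with $s$ is a subalgebra, so $s$ commutes with the whole of $\varphi(L)$, in particular with $\varphi a$ and $\varphi X$.

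Two facts now meet. By (i), $\varphi a\le s'$. By (ii) and commutativity, $\varphi a=\pi_{\varphi a}\pi_{\varphi X}(\varphi t)$. Since $\varphi t\le s$ and each Sasaki projection by an element commuting with $s$ maps the principal ideal of $s$ into itself, we get $\varphi a\le s$. Therefore $\varphi a\le s\wedge s'=0$, contradicting injectivity because $a\ne0$. The complete case follows since complete implies $\sigma$-complete.

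\emph{Main obstacle.} The hard part is the concrete design of the coordinates achieving (i) and (ii) simultaneously. We need every finite word applied to $t$ to eventually be orthogonal to $a$, while a single two-step Sasaki path from $t$ reaches all of $a$. This is possible only because the reduced product destroys uniformity: in coordinate $k$, the path through $X$ uses a ``shortcut'' of length two that words in $e,f,g$ of bounded length cannot simulate. My first attempt would be a chain of $n_k$ lines cycled by $e,f,g$, with step $j$ advancing only to line $j+1$ within the orbit. I would take $a_k$ to be the last line and $X_k$ a term built from $e,f,g$ joining $t_k$ directly toward $a_k$, for instance a diagonal obtained as a meet of spans. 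I would verify, by explicit Gram computations, that $X_k$ is term-definable uniformly in $k$.
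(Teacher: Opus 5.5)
Your target half is sound and is essentially the paper's orbit obstruction (Theorem~\ref{thm:obstruction}). Sasaki adjunction makes $\pi_x$ preserve existing joins, so $\pi_{\varphi x}(s)\le s$ for each generator image. That gives compatibility, so $s$ commutes with $\varphi[L]$, and then $\varphi a\le s\wedge s'=0$.

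\textbf{The gap} is in the other half, which is where the theorem's content lies. The statement is an existence claim, and you never exhibit an $L$ satisfying (i) and (ii). You give no generators, no terms $t$, $a$, $X$, and no computation. Your ``first attempt'' (lines ``cycled'' by $e,f,g$, with $X_k$ ``a diagonal obtained as a meet of spans'') is not specific enough to check.

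The two requirements also pull against each other.
\begin{itemize}
\item Each fixed word in the Sasaki projections of the generators must eventually be orthogonal to $a$. In practice this means those projections act locally, moving support a bounded amount per letter.
\item Some finite lattice term, built with meets and complements that the orbit does not use, must produce a subspace linking the two ends, uniformly in the coordinate.
\item $t$ and $a$ must themselves be finite terms, which a cyclic arrangement does not provide.
\end{itemize}
Until such a source is produced and verified, (i) and (ii) remain hypotheses. Your argument then only establishes the conditional Theorem~\ref{thm:obstruction}, not the existence statement.

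\textbf{How the paper closes this gap.} In $E_N=\R^N\oplus\R^N$ it takes $p_N=H_N\oplus0$, $q_N=\gr T_N$ with $T_N$ the nilpotent shift $e_i\mapsto e_{i+1}$, and $r_N=\gr\id$.
\begin{itemize}
\item $t=p\wedge q$ gives $\R(e_{N-1},0)$ through $\ker T_N$.
\item $a=\pi_p\pi_r(p'\wedge q')$ gives $\R(e_0,0)$ through $\ker T_N^*$.
\item $X=\pi_p\bigl(((q\wedge t')\vee r)'\bigr)$ gives $\R(u_N,0)$ with $u_N$ the all-ones vector. Here the complement forces $x_i=x_{i+1}$ for all $i$, and this is the global shortcut.
\end{itemize}
Hence $P_{a_N}P_{X_N}(e_{N-1},0)=(e_0/N,0)\ne0$, which gives (ii) in every coordinate. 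The factor $1/N$ is harmless because the quotient uses eventual equality of subspaces, not a norm limit.

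For (i), $P_{p_N}$ and $P_{r_N}$ preserve $K_N(\ell)$. The explicit formula $z_i=(x_i+y_{i+1})/2$ shows that $P_{q_N}$ maps $K_N(\ell)$ into $K_N(\max(0,\ell-1))$. So a word of length $k$ sends $t_N$ into $K_N(\max(0,N-1-k))$, which is orthogonal to $a_N$ once $N\ge k+2$.

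Your infinitude claim, which you leave unverified, needs no separate check once (i) and (ii) hold. A finite OML is complete, so the identity map on a finite $L$ would contradict the obstruction. The paper instead verifies infinitude directly via the lines $F^n(a)$.
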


Three-generated means generated by the displayed triple; no minimal
generator cardinality is asserted. We prove a stronger local statement:
there are $a\ne0$ and a countable $O\subseteq L$ such that every
homomorphism $f:L\to C$ into any orthomodular lattice satisfies $f(a)=0$
whenever the single supremum $\bigvee_C f[O]$ exists.

We first derive the finite lemmas needed in arbitrary targets, then
isolate an orbit obstruction and construct an actual source satisfying
it. The proof is self-contained apart from finite-dimensional real
linear algebra and ordinary set constructions.
Section~\ref{sec:literature} treats the literature comparison separately.
Section~\ref{sec:status} records verification scope and a distinct
historical repair-congruence question.

\section{Definitions}
An \emph{orthomodular lattice} (OML) is a bounded lattice with an
order-reversing involution $x\mapsto x'$ satisfying
$x\wedge x'=0$, $x\vee x'=1$, and
\begin{equation}\label{eq:om}
            x\le y\quad\Longrightarrow\quad y=x\vee(y\wedge x').
\end{equation}
De Morgan identities follow from the order-reversing involution.
Elements $x,y$ are orthogonal when $x\le y'$, equivalently $y\le x'$.
A homomorphism preserves the five operations above and therefore all
finite terms, by structural induction. It is order-preserving because
$x\le y$ means $x=x\wedge y$.

We call an OML \emph{$\sigma$-complete} if every countable subset has a
supremum. Lemma~\ref{lem:orthogonal-sigma} also covers the convention
requiring only countable pairwise orthogonal suprema.
A \emph{complete} OML has suprema and infima of all subsets.
All carriers are sets; no relationship between the cardinalities of
source and target is prescribed.

The \emph{Sasaki projection} is the finite term
$\pi_b(x)=b\wedge(b'\vee x)$. Composition is read from right to left.
A word $w=s_k\cdots s_1$ on parameters $S$ acts as
$w(x)=\pi_{s_k}\cdots\pi_{s_1}(x)$; the empty word acts identically.

\section{Finite lemmas in an arbitrary OML}\label{sec:finite}

\begin{lemma}\label{lem:dual}
In any OML,
\begin{align}
 z\le b&\ \Longrightarrow\ b\wedge(b'\vee z)=z,\label{eq:dual}\\
 b'\vee x&=b'\vee\pi_b(x).\label{eq:E}
\end{align}
\end{lemma}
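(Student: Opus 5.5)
Both identities follow directly from the orthomodular law \eqref{eq:om}, applied once to each. For \eqref{eq:dual}, take $z\le b$ and pass to orthocomplements: $b'\le z'$. Applying \eqref{eq:om} to this pair gives
\[
z' = b'\vee(z'\wedge b).
\]
Taking complements and using the De Morgan identities then yields $z = b\wedge(z\vee b')$, which is \eqref{eq:dual} after commuting the join.

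For \eqref{eq:E}, the key observation is that the right-hand side has a form to which \eqref{eq:om} applies. Put $y=b'\vee x$. Then $b'\le y$, and \eqref{eq:om} gives
\[
y = b'\vee(y\wedge b'') = b'\vee(b\wedge(b'\vee x)) = b'\vee\pi_b(x),
\]
using $b''=b$. This is exactly \eqref{eq:E}.

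There is no real obstacle here. The only points needing care are the order reversal under the involution when deriving \eqref{eq:dual}, and recognising that $\pi_b(x)$ is precisely the term $y\wedge b$ produced by the orthomodular law with $y=b'\vee x$.
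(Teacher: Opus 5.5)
Your proof is correct and follows the paper's argument exactly. For \eqref{eq:dual} you apply \eqref{eq:om} to $b'\le z'$ and take complements, and for \eqref{eq:E} you apply \eqref{eq:om} to $b'\le b'\vee x$, recognising $(b'\vee x)\wedge b$ as $\pi_b(x)$.
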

\begin{proof}
If $z\le b$, apply~\eqref{eq:om} to $b'\le z'$ to obtain
$z'=b'\vee(z'\wedge b)$. Taking complements gives~\eqref{eq:dual}.
Applying~\eqref{eq:om} to $b'\le b'\vee x$ gives
$b'\vee x=b'\vee((b'\vee x)\wedge b)=b'\vee\pi_b(x)$.
\end{proof}

\begin{lemma}[Adjunction and existing suprema]\label{lem:adjunction}
For arbitrary $b,x,y$ in an OML,
\begin{equation}\label{eq:adjunction}
       \pi_b(x)\le y\quad\Longleftrightarrow\quad
       x\le b'\vee(b\wedge y).
\end{equation}
If $Y$ has an actual supremum $j$, then $\pi_b[Y]$ has a supremum and
\begin{equation}\label{eq:sup}
                       \pi_b(j)=\bigvee_{y\in Y}\pi_b(y).
\end{equation}
\end{lemma}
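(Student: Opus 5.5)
The plan is to prove the Galois adjunction~\eqref{eq:adjunction} first and then deduce~\eqref{eq:sup} from it formally, since a left adjoint preserves every supremum that exists. Throughout, write $\phi_b(y)=b'\vee(b\wedge y)$ for the right adjoint. Both $\pi_b$ and $\phi_b$ are monotone, because they are built from $\wedge$, $\vee$ and fixed parameters.

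\emph{Forward direction.} Suppose $\pi_b(x)\le y$. Since $\pi_b(x)\le b$, we get $\pi_b(x)\le b\wedge y$, and hence
\[
b'\vee\pi_b(x)\le b'\vee(b\wedge y).
\]
By~\eqref{eq:E} the left side equals $b'\vee x$, so $x\le b'\vee x\le \phi_b(y)$.

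\emph{Reverse direction.} Suppose $x\le\phi_b(y)$. Monotonicity of $\pi_b$ gives $\pi_b(x)\le\pi_b(\phi_b(y))=b\wedge\bigl(b'\vee(b\wedge y)\bigr)$. Put $z=b\wedge y\le b$. Then~\eqref{eq:dual} gives $b\wedge(b'\vee z)=z$, so $\pi_b(x)\le b\wedge y\le y$.

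The only step needing orthomodularity is the identity $\pi_b\phi_b(y)=b\wedge y$, and Lemma~\ref{lem:dual} supplies it directly. I therefore do not expect a real obstacle there. The main care point is bookkeeping: \eqref{eq:E} must be used in the forward direction and \eqref{eq:dual} in the reverse direction.

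\emph{Suprema.} Let $j=\bigvee Y$ exist. For each $y\in Y$ we have $y\le j$, so monotonicity gives $\pi_b(y)\le\pi_b(j)$, and $\pi_b(j)$ is an upper bound of $\pi_b[Y]$. Now let $u$ be any upper bound of $\pi_b[Y]$. For each $y\in Y$, $\pi_b(y)\le u$, and~\eqref{eq:adjunction} gives $y\le\phi_b(u)$. Thus $\phi_b(u)$ is an upper bound of $Y$, so $j\le\phi_b(u)$. Applying~\eqref{eq:adjunction} again yields $\pi_b(j)\le u$.

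Hence $\pi_b(j)$ is the least upper bound of $\pi_b[Y]$. This proves both that the supremum exists and~\eqref{eq:sup}. No completeness is used, and the empty $Y$ is covered as well, since then $j=0$ and $\pi_b(0)=b\wedge b'=0$.
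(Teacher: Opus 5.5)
Your proposal is correct and follows essentially the same route as the paper: the forward direction of the adjunction via \eqref{eq:E}, the reverse via \eqref{eq:dual} applied to $b\wedge y\le b$, and then the standard argument that the left adjoint $\pi_b$ preserves every existing supremum. Your added remark on the empty family is also correct ($\pi_b(0)=b\wedge b'=0$), though the paper does not need to mention it separately.
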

\begin{proof}
If $\pi_b(x)\le y$, then $\pi_b(x)\le b\wedge y$.
Equation~\eqref{eq:E} gives
$x\le b'\vee x=b'\vee\pi_b(x)\le b'\vee(b\wedge y)$.
Conversely, $x\le b'\vee(b\wedge y)$ and monotonicity give
\[
 \pi_b(x)\le b\wedge\bigl(b'\vee(b\wedge y)\bigr)=b\wedge y\le y,
\]
where~\eqref{eq:dual} applies to $b\wedge y\le b$.
Monotonicity makes $\pi_b(j)$ an upper bound for the image family.
Any upper bound $v$ of that family, by adjunction, makes
$b'\vee(b\wedge v)$ an upper bound for $Y$.
The leastness of $j$ and adjunction give $\pi_b(j)\le v$.
This proves existence and leastness of the image supremum without
an infinite distributive law.
\end{proof}

\begin{lemma}[Projection stability and compatibility]\label{lem:compatibility}
If $\pi_s(c)\le c$, then
\begin{equation}\label{eq:compat}
 c=(c\wedge s)\vee(c\wedge s'),\qquad
 s=(s\wedge c)\vee(s\wedge c').
\end{equation}
Conversely, the first decomposition gives $\pi_s(c)=s\wedge c$.
\end{lemma}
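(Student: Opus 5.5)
Assume $\pi_s(c)\le c$. Since $\pi_s(c)\le s$ as well, we get $\pi_s(c)\le c\wedge s$. The reverse inequality follows from monotonicity of $\pi_s$ and \eqref{eq:dual}: $c\wedge s\le s$ gives $\pi_s(c\wedge s)=c\wedge s$, and so $c\wedge s=\pi_s(c\wedge s)\le\pi_s(c)$. Thus $\pi_s(c)=s\wedge c$. Equation \eqref{eq:E} then yields
\[
 s'\vee c=s'\vee(s\wedge c).
\]
This is the key identity.

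For the first decomposition in \eqref{eq:compat}, we apply \eqref{eq:om} to $c\wedge s\le c$ and obtain $c=(c\wedge s)\vee(c\wedge(c\wedge s)')=(c\wedge s)\vee(c\wedge(c'\vee s'))$. It therefore suffices to show $c\wedge(c'\vee s')\le c\wedge s'$. The reverse inequality is trivial, so we would get equality. To show it, we take complements in the key identity after swapping roles.

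More precisely, we apply the key identity to the complementary Sasaki map. The orthomodular law applied to $c'\wedge s'$ and the identity $s'\vee c=s'\vee(s\wedge c)$ give, after complementing, $s\wedge c'=s\wedge(s'\vee c')$. Symmetrically, we want $c\wedge(c'\vee s')=c\wedge s'$. An alternative route is to show $c\wedge s'\le c$ with orthocomplement-in-$c$ equal to $c\wedge s$. The element $d=c\wedge(c\wedge s)'$ is the relative complement of $c\wedge s$ in $c$. We must show $d\le s'$, that is, $d\wedge s=0$ plus commutation. Here I would instead use the standard OML fact that $c=(c\wedge s)\vee(c\wedge s')$ is equivalent to $s'\vee c = s'\vee (s\wedge c)$ (the Foulis–Holland style commutation criterion). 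I would derive it directly. From the key identity, $c\le s'\vee(s\wedge c)$. We set $u=s\wedge c$, so that $u\le c$ and $c\le s'\vee u$. Then $c=c\wedge(s'\vee u)$. Applying \eqref{eq:om} to $u\le c$ gives $c=u\vee(c\wedge u')$. Now $c\wedge u'\le (s'\vee u)\wedge u'$. Since $u\le s$, we have $s'\le u'$, and \eqref{eq:om} gives $u'=s'\vee(u'\wedge s)$. Dually, \eqref{eq:dual} applied to $s'\le u'$ gives $(s'\vee u)\wedge u'=s'$, because $u'\wedge(u\vee s')=s'$ by \eqref{eq:dual} with $b=u'$ and $z=s'$. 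Hence $c\wedge u'\le c\wedge s'$, and the first decomposition follows.

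The second decomposition is the symmetry of compatibility. I expect this to be the main obstacle, since it requires a genuine orthomodular argument rather than a lattice one. We set $v=(s\wedge c)\vee(s\wedge c')\le s$ and aim to show $s\wedge v'=0$; then \eqref{eq:om} gives $s=v\vee(s\wedge v')=v$. First, $s\wedge v'\le s\wedge(s\wedge c)'\wedge(s\wedge c')'$. Next, $s\wedge c'\le$ something we control. From the first decomposition, $c'=(c'\vee s')\wedge(c'\vee s)$, so $s\wedge c'$ is the relative complement of $s\wedge c$ under $s$ modulo this. I would show $x=s\wedge v'$ satisfies $x\le c'$, since $x\le s\wedge(s\wedge c)'$ and $c\le s'\vee(s\wedge c)$ give $x\le (s'\vee(s\wedge c))'\vee\ldots$, that is, $x\perp c$. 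Then $x\le s\wedge c'\le v$ together with $x\le v'$ forces $x=0$.

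For the converse, the first decomposition gives $c\le s'\vee(s\wedge c)$. By monotonicity of $\pi_s$ and \eqref{eq:dual}, $\pi_s(c)\le\pi_s(s'\vee(s\wedge c))=s\wedge(s'\vee(s\wedge c))=s\wedge c$. Combined with $s\wedge c\le\pi_s(c)$, shown above, this gives $\pi_s(c)=s\wedge c$.
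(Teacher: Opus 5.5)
Your proof is correct and follows the paper's route. You identify $\pi_s(c)=s\wedge c$, obtain $c\le s'\vee(s\wedge c)$, and apply \eqref{eq:dual} to $s'\le(s\wedge c)'$ to get the first decomposition, and your converse is the paper's. For the symmetric decomposition you rely on the same inequality $s\wedge(s\wedge c)'\le c'$, but you get it directly by De Morgan from $c\le s'\vee(s\wedge c)$ instead of from the first decomposition as the paper does. Since $s\wedge(s\wedge c)'=(s'\vee(s\wedge c))'$ holds exactly, drop the stray ``$\vee\ldots$'' and the abandoned exploratory detours when you write it up.
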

\begin{proof}
Put $u=\pi_s(c)$. We have $s\wedge c\le u$, while the hypothesis
gives $u\le s\wedge c$; hence $u=s\wedge c$.
Adjunction gives $c\le s'\vee u$.
Orthomodularity gives $c=u\vee v$, where $v=c\wedge u'$.
Since $s'\le u'$, equation~\eqref{eq:dual} gives
$v\le u'\wedge(u\vee s')=s'$.
The reverse inequality $c\wedge s'\le c\wedge u'=v$ gives
$v=c\wedge s'$.

Write $s=u\vee w$ with $w=s\wedge u'$.
Since $v\le s'$, we have $w\le s\le v'$ and thus
$w\le u'\wedge v'=(u\vee v)'=c'$.
Also $s\wedge c'\le s\wedge u'=w$, so $w=s\wedge c'$.
This proves symmetry.
For the converse, the first decomposition gives
$s'\vee c=s'\vee(s\wedge c)$, and~\eqref{eq:dual} gives
$\pi_s(c)=s\wedge c$.
\end{proof}

Write $x\cc c$ for these symmetric compatibility decompositions.

\begin{lemma}[Finite closure of a commutant]\label{lem:commutant}
For fixed $c$, the elements compatible with $c$ form a bounded
subOML. If $x\cc c$, then $\pi_x(c)=x\wedge c$.
\end{lemma}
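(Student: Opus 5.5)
The plan is to work directly from the symmetric decompositions of Lemma~\ref{lem:compatibility} and prove closure of the commutant under $0$, $1$, $'$, and then $\wedge$ (hence $\vee$ by De Morgan). The constants are immediate: $c=(c\wedge 0)\vee(c\wedge 1)$ and $c=(c\wedge1)\vee(c\wedge0)$. For complements, note that the decomposition $x=(x\wedge c)\vee(x\wedge c')$ is symmetric in the pair $(c,c')$. It is also known to be equivalent to $c=(c\wedge x)\vee(c\wedge x')$, which is the statement of the symmetry part of Lemma~\ref{lem:compatibility}. So I would first record the general fact: $x\cc c$ implies $x'\cc c$. The first decomposition for $c$ relative to $x$ reads $c=(c\wedge x)\vee(c\wedge x')$, and this is literally the decomposition of $c$ relative to $x'$ (since $x''=x$). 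The symmetry in Lemma~\ref{lem:compatibility} then yields the decomposition of $x'$ relative to $c$. To invoke that lemma I need its hypothesis $\pi_{s}(c)\le c$. The converse clause of Lemma~\ref{lem:compatibility} supplies this: a first decomposition gives $\pi_s(c)=s\wedge c\le c$. So ``first decomposition'' and ``symmetric compatibility'' coincide, and I will use this equivalence freely.

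For meets, take $x,y\cc c$ and show $\pi_c(x\wedge y)\le x\wedge y$, or equivalently exhibit the decomposition. I would aim for the standard route. First show that $x\cc c$ and $y\cc c$ give $x\wedge c$, $x\wedge c'$, $y\wedge c$, $y\wedge c'$ with the elements under $c$ orthogonal to those under $c'$. Then prove $x\wedge y=(x\wedge y\wedge c)\vee(x\wedge y\wedge c')$. The inequality $\ge$ is trivial. For $\le$, put $d=(x\wedge y\wedge c)\vee(x\wedge y\wedge c')\le x\wedge y$ and apply orthomodularity~\eqref{eq:om}: $x\wedge y=d\vee(x\wedge y\wedge d')$, so it suffices to show $e:=x\wedge y\wedge d'=0$. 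The point is to show $e\le c$ and $e\le c'$.

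This is the main obstacle: it is essentially the Foulis--Holland type fact, and must be derived from the orthomodular axiom alone. I would try to show $e\le c'\vee(c\wedge x\wedge y)$ via the adjunction~\eqref{eq:adjunction}. Using $x\le c'\vee(c\wedge x)$ and $y\le c'\vee(c\wedge y)$ (from $\pi_c(x)=c\wedge x$ and adjunction), it suffices to check $(c'\vee(c\wedge x))\wedge(c'\vee(c\wedge y))=c'\vee(c\wedge x\wedge y)$. This is a meet of two elements each compatible with $c'$ and containing $c'$. It reduces, by \eqref{eq:dual} inside the interval $[c',1]$ (which is orthoisomorphic to $[0,c]$ via $z\mapsto c\wedge z$, with inverse $u\mapsto c'\vee u$ by~\eqref{eq:dual}), to the trivial identity $(c\wedge x)\wedge(c\wedge y)=c\wedge x\wedge y$. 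I would verify that this Sasaki map is a lattice isomorphism $[c',1]\to[0,c]$; it is order-preserving with order-preserving inverse, by~\eqref{eq:dual} and~\eqref{eq:E}. Hence $\pi_c(x\wedge y)\le c\wedge x\wedge y\le x\wedge y$, and Lemma~\ref{lem:compatibility} gives $x\wedge y\cc c$. This bypasses the element $e$ entirely, which I would prefer. Joins follow from meets and complements via De Morgan.

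Finally, for the last assertion, $x\cc c$ gives by symmetry the first decomposition of $c$ relative to $x$, namely $c=(c\wedge x)\vee(c\wedge x')$. The converse clause of Lemma~\ref{lem:compatibility}, applied with $s=x$, then yields $\pi_x(c)=x\wedge c$.
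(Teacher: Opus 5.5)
Your proof is correct, but it is organized differently from the paper's.

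\textbf{The paper's route.} It first shows that any split $x=u\vee v$ with $u\le c$ and $v\le c'$ forces $u=x\wedge c$ and $v=x\wedge c'$ (via \eqref{eq:dual}). So compatibility with $c$ is the same as admitting some orthogonal split. Join closure is then just a regrouping of finite joins. Complements need the explicit witness $B=(c\wedge u')\vee(c'\wedge v')$ and a short orthomodularity computation showing $x'=B$. Meets follow by De Morgan.

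\textbf{Your route.} You get complements essentially for free from the symmetry already proved in Lemma~\ref{lem:compatibility}: the decomposition of $c$ relative to $x$ is literally its decomposition relative to $x'$. You handle meets through the Sasaki criterion $\pi_c(z)\le z$. Compatibility of $x,y$ with $c$ is used only to obtain $x\le c'\vee(c\wedge x)$ and $y\le c'\vee(c\wedge y)$. The identity $(c'\vee(c\wedge x))\wedge(c'\vee(c\wedge y))=c'\vee(c\wedge x\wedge y)$ holds for arbitrary $x,y$. This is because $u\mapsto c'\vee u$ is an order isomorphism $[0,c]\to[c',1]$ with inverse $z\mapsto c\wedge z$, and such a map preserves meets. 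Adjunction then gives $\pi_c(x\wedge y)\le x\wedge y$, and joins follow by De Morgan.

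\textbf{Comparison.} Your approach reuses the earlier lemmas more economically and makes the complement step trivial. The paper's approach yields the intrinsic ``any orthogonal split'' characterization, which makes joins immediate without the interval isomorphism.

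\textbf{What to cut.} The exploratory material about the element $e$ and the Foulis--Holland-type obstacle plays no role in your final argument. The same goes for the preliminary remark that the corner pieces are mutually orthogonal. Both should be dropped from a written version.
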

\begin{proof}
Suppose $x=u\vee v$, $u\le c$, $v\le c'$.
Since $c\le v'$, equation~\eqref{eq:dual} gives
$x\wedge c\le v'\wedge(v\vee u)=u$; the reverse inclusion follows
from $u\le x,c$. Similarly $c'\le u'$ gives
$x\wedge c'\le u'\wedge(u\vee v)=v$, and $v\le x,c'$ gives the reverse.
Thus the decomposition is exactly compatibility under $c$.
It yields $\pi_c(x)=c\wedge x\le x$ and hence its symmetric form by
Lemma~\ref{lem:compatibility}.

For two such decompositions, finite join laws give
$x_1\vee x_2=(u_1\vee u_2)\vee(v_1\vee v_2)$ with the same corner
bounds. No distribution is used.
For the complement put $B=(c\wedge u')\vee(c'\wedge v')$.
Each term is below both $u'$ and $v'$, since $c\le v'$ and $c'\le u'$.
Hence $B\le x'$. Orthomodularity applied to $u\le c$ and $v\le c'$
gives $x\vee B=c\vee c'=1$.
Applied to $B\le x'$, it gives
$x'=B\vee(x'\wedge B')=B\vee(x\vee B)'=B$.
Thus complements have the required decomposition.
Meet closure follows by De Morgan; $0=0\vee0$ and $1=c\vee c'$ are
included. The subalgebra inherits orthomodularity.
Finally $c=(x\wedge c)\vee(x'\wedge c)$ gives
$x'\vee c=x'\vee(x\wedge c)$.
Equation~\eqref{eq:dual} then gives $\pi_x(c)=x\wedge c$.
\end{proof}

\section{A countable orbit obstruction}\label{sec:obstruction}

\begin{theorem}[Orbit obstruction]\label{thm:obstruction}
Let $L$ be generated as a bounded OML by a finite or countable set $S$.
Suppose $a,t,X\in L$ satisfy
\begin{equation}\label{eq:source-identities}
 a\ne0,\qquad \pi_a\pi_X(t)=a,\qquad
 w(t)\le a'\quad(w\in S^*).
\end{equation}
Put $O=\{w(t):w\in S^*\}$. For every homomorphism $f:L\to C$
into an arbitrary OML, existence of $c=\bigvee_C f[O]$ implies $f(a)=0$.
No such map into a $\sigma$-complete OML is injective.
\end{theorem}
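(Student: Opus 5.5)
Write $c=\bigvee_C f[O]$ and, for $s\in S$, write $\bar s=f(s)$. The plan is to show that $c$ is stable under every $\pi_{\bar s}$, then to derive compatibility, and finally to use the two source identities to force $f(a)=0$.

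\emph{Stability.} First, $O$ is closed under every $\pi_s$ with $s\in S$, because $\pi_s w(t)=(sw)(t)$. Since $f$ preserves finite terms, $\pi_{\bar s}f(o)=f(\pi_s o)\in f[O]$ for each $o\in O$. Apply Lemma~\ref{lem:adjunction}, equation~\eqref{eq:sup}, to $Y=f[O]$ with supremum $c$. It gives
\[
\pi_{\bar s}(c)=\bigvee_{y\in f[O]}\pi_{\bar s}(y)\le c,
\]
because each $\pi_{\bar s}(y)$ lies in $f[O]$ and hence below $c$. By Lemma~\ref{lem:compatibility}, $\bar s\cc c$ for all $s\in S$.

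\emph{Compatibility spreads.} By Lemma~\ref{lem:commutant}, the elements compatible with $c$ form a bounded subOML of $C$. This subOML contains $f[S]$. Since $f[L]$ is the subOML generated by $f[S]$ (the image of a generating set under a homomorphism generates the image), every $f(x)$ with $x\in L$ is compatible with $c$. In particular $f(a)\cc c$ and $f(X)\cc c$.

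\emph{Bounding $f(a)$.} Each element of $O$ lies below $a'$, so each element of $f[O]$ lies below $f(a)'$. Hence $c\le f(a)'$. Also $f(t)\in f[O]$, since $t$ is the image under the empty word, so $f(t)\le c$.

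Monotonicity of Sasaki projections (they are finite terms built from monotone operations in $x$) then gives
\[
f(a)=f(\pi_a\pi_X t)=\pi_{f(a)}\pi_{f(X)}f(t)\le \pi_{f(a)}\pi_{f(X)}(c).
\]
Since $f(X)\cc c$, Lemma~\ref{lem:commutant} gives $\pi_{f(X)}(c)=f(X)\wedge c\le c$. Applying monotonicity once more,
\[
f(a)\le\pi_{f(a)}(c)=f(a)\wedge c\le f(a)\wedge f(a)'=0,
\]
where the middle equality uses $f(a)\cc c$ and Lemma~\ref{lem:commutant}. Hence $f(a)=0$.

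The only delicate point is keeping the order of variables right in Lemma~\ref{lem:commutant}, which gives $\pi_x(c)=x\wedge c$ when $x\cc c$. Alternatively one could use only $\pi_{f(a)}(c)\le f(a)\wedge(f(a)'\vee c)=f(a)\wedge f(a)'=0$, which holds because $c\le f(a)'$ and needs no compatibility at all. The compatibility of $f(X)$ with $c$ is what lets $\pi_{f(X)}(c)$ stay below $c$.

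\emph{Injectivity.} If $C$ is $\sigma$-complete, note that $S^*$ is countable because $S$ is countable, so $O$ and $f[O]$ are countable. Thus $c$ exists and $f(a)=0=f(0)$ with $a\ne0$, so $f$ is not injective. I expect the main obstacle to be the stability step, namely that the supremum is preserved by $\pi_{\bar s}$ without any distributivity. That is exactly what equation~\eqref{eq:sup} supplies, so the rest is bookkeeping.
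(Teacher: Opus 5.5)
Your proposal is correct and follows the paper's proof essentially step for step. The $\pi_s$-closure of $O$ together with the adjunction sup-preservation gives $\pi_{f(s)}(c)\le c$; Lemmas~\ref{lem:compatibility} and~\ref{lem:commutant} spread compatibility to $f[L]$; and the chain $f(a)\le\pi_{f(a)}\pi_{f(X)}(c)\le\pi_{f(a)}(c)=0$ finishes, the only cosmetic difference being that your main line evaluates $\pi_{f(a)}(c)$ via $f(a)\cc c$ while the paper uses $c\le f(a)'$ directly, exactly as in your stated alternative.
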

\begin{proof}
The orbit is countable, contains $t$, and satisfies $\pi_s[O]\subseteq O$.
Lemma~\ref{lem:adjunction} and finite-term preservation give
\[
                 \pi_{f(s)}(c)=\bigvee_{o\in O}f(\pi_s(o))\le c.
\]
Thus $c$ is compatible with every $f(s)$.
Every element of an algebra generated under finitary operations is a
finite term in its generators. Lemma~\ref{lem:commutant} implies $f(X)\cc c$.
The orbit inequalities give $f(o)\le f(a)'$, so target leastness gives
$c\le f(a)'$. The empty word gives $f(t)\le c$.
Set $\alpha=f(a)$ and $\xi=f(X)$. Then
\begin{align*}
 \alpha&=\pi_\alpha\pi_\xi(f(t))
 \le\pi_\alpha\pi_\xi(c)
 =\pi_\alpha(\xi\wedge c)\\
 &\le\pi_\alpha(c)
 =\alpha\wedge(\alpha'\vee c)=\alpha\wedge\alpha'=0.
\end{align*}
The inequalities use monotonicity; the middle equality uses established
compatibility. Thus $f(a)=f(0)$ although $a\ne0$.
Countability of $O$ supplies $c$ in a $\sigma$-complete target.
\end{proof}

\begin{remark}
The supremum is in the target; no source supremum or infinitary
preservation by $f$ is assumed. Compatibility with $f[L]$ suffices;
centrality of $c$ in all of $C$ is not asserted.
\end{remark}

\section{The explicit source}\label{sec:source}
\subsection{An actual cofinite quotient}

For $N\ge1$, take $H_N=\R^N$ with orthonormal basis
$e_0,\ldots,e_{N-1}$ and $E_N=H_N\oplus H_N$. Define
\[
 T_Ne_i=e_{i+1}\ (i<N-1),\qquad T_Ne_{N-1}=0.
\]
Let $M_N$ be the lattice of all real linear subspaces of $E_N$, with intersection,
linear sum, and Euclidean orthogonal complement.
It is a set. Arbitrary intersections and spans of unions give complete
lattice operations. If $A\subseteq B$, orthogonal projection decomposes
each $b\in B$ as $P_Ab+(b-P_Ab)$, with the second term in $B\cap A^\perp$.
Thus $B=A+(B\cap A^\perp)$ and $M_N$ is a complete OML.
Finite dimension does not mean a finite carrier.

Give the set-sized product $M=\prod_{N\ge1}M_N$ its coordinatewise
bounded ortholattice operations. Define $A\sim B$ by
$A_N=B_N$ for all sufficiently large $N$.
It is a congruence for finite operations: unary operations use the same
equality tail, and binary operations use an intersection of two tails.
The quotient $M_F=M/{\sim}$ has order
\begin{equation}\label{eq:quotient-order}
 [A]\le[B]\quad\Longleftrightarrow\quad
 A_N\subseteq B_N\text{ for all sufficiently large }N.
\end{equation}
Both sides mean $[A\wedge B]=[A]$.
On the tail of this order comparison, coordinate orthomodularity gives
$[B]=[A]\vee([B]\wedge[A]')$.
Bounded ortholattice identities also pass to the quotient.
The bounds differ at every coordinate, so $0\ne1$.
Hence $M_F$ is an actual set-sized OML; its completeness is not assumed.

Let $p,q,r$ be the classes of
\begin{equation}\label{eq:generators}
 p_N=H_N\oplus0,\qquad q_N=\gr T_N,\qquad r_N=\gr\id_{H_N}.
\end{equation}
Let $L=L_{3F}$ be the bounded subOML they generate.
It inherits the order and orthomodularity, since the operations in
\eqref{eq:om} remain inside it.
There are countably many finite terms over this finite alphabet,
so $L$ is at most countable. It is an actual subalgebra, not an
unchecked free presentation.

\subsection{Endpoint and global lines as finite terms}

For Euclidean subspaces, the Sasaki term is the orthogonal image:
\begin{equation}\label{eq:projection-image}
                         \pi_S(V)=P_S[V].
\end{equation}
If $z\in S\cap(S^\perp+V)$, write $z=u+v$ with $u\in S^\perp$, $v\in V$;
then $z=P_Sv$. Conversely $P_Sv=v-P_{S^\perp}v$ is in both
$S$ and $S^\perp+V$.
Define the finite terms
\begin{equation}\label{eq:a-t-X}
 a=\pi_p\pi_r(p'\wedge q'),\qquad t=p\wedge q,\qquad
 X=\pi_p\bigl(((q\wedge t')\vee r)'\bigr).
\end{equation}

\begin{lemma}\label{lem:coordinates}
For all $N\ge1$, with $u_N=\sum_{i<N}e_i$,
\[
 a_N=\R(e_0,0),\qquad t_N=\R(e_{N-1},0),\qquad X_N=\R(u_N,0).
\]
Therefore $\pi_a\pi_X(t)=a\ne0$ in $L$.
\end{lemma}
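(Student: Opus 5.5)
Since $L$ sits in $M_F$ and the operations are coordinatewise, every finite term is computed coordinatewise, and its class is determined by its coordinates for large $N$. So I will compute $a_N,t_N,X_N$ for each fixed $N$ in $M_N$, using \eqref{eq:projection-image} to read each Sasaki term as an orthogonal image $P_S[V]$. Then I deduce the identity in $L$ and $a\neq0$.

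\textbf{The line $t$.} A vector $(x,0)\in p_N$ lies in $q_N=\gr T_N$ iff $T_Nx=0$, i.e.\ $x\in\R e_{N-1}$. Hence $t_N=\R(e_{N-1},0)$.

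\textbf{The line $a$.} First, $p'=0\oplus H_N$ and $q'=\{(-T_N^{*}y,y)\}$, where $T_N^*e_{i+1}=e_i$ and $T_N^*e_0=0$. A vector $(0,y)$ lies in $q'$ iff $T_N^*y=0$, so $p'\wedge q'=\R(0,e_0)$. The projection onto the diagonal $r_N$ sends $(0,e_0)$ to $\tfrac12(e_0,e_0)$. Projecting this onto $p_N$ gives the nonzero vector $\tfrac12(e_0,0)$, so $a_N=\R(e_0,0)$.

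\textbf{The line $X$.} This is the main obstacle, because it requires $(q\wedge t')\vee r$ and its complement. The element $q\wedge t'$ consists of $(x,T_Nx)$ with $x_{N-1}=0$, so it has dimension $N-1$. Adding $r_N$ gives all $(x+z,T_Nx+z)$, that is, all vectors $(\alpha,\beta)$ with $\alpha-\beta=(I-T_N)x$ for some $x\perp e_{N-1}$. The image $(I-T_N)[e_{N-1}^\perp]$ is a hyperplane of $H_N$, since $I-T_N$ is invertible. It is the orthogonal complement of the vector $w$ solving $(I-T_N)^*w=e_{N-1}$ up to scale, i.e.\ $(I-T_N^*)w\parallel e_{N-1}$. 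Solving $w_i-w_{i+1}=0$ for $i<N-1$ and $w_{N-1}=1$ gives $w=u_N$. Thus the sum is $\{(\alpha,\beta):\langle\alpha-\beta,u_N\rangle=0\}$. This is a hyperplane of $E_N$ with complement $\R(u_N,-u_N)$. Projecting onto $p_N$ gives $X_N=\R(u_N,0)$. I will double-check the dimension count, namely that $(N-1)+N$ independent directions give a $2N-1$ sum, because the intersection of $q\wedge t'$ with $r$ is trivial by invertibility of $I-T_N$.

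\textbf{The identity in $L$.} The projection of $(e_{N-1},0)$ onto $\R(u_N,0)$ is $\tfrac1N(u_N,0)$, which is nonzero. Projecting it onto $\R(e_0,0)$ gives $\tfrac1N(e_0,0)$, again nonzero. So $\pi_{a_N}\pi_{X_N}(t_N)=a_N$ holds for every $N$, hence the identity holds in $L$. Finally, $a_N\neq0$ for all $N$ shows $a\not\sim0$, so $a\neq0$ in $M_F$ and hence in $L$.
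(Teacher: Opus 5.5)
Your proposal is correct and follows the paper's proof. It uses the same coordinatewise computations of $t_N$ via $\ker T_N$ and of $a_N$ via $q_N^\perp=\{(-T_N^*y,y)\}$ followed by the projections onto $r_N$ and $p_N$, and it obtains the final identity from the same nonzero coefficient $1/N$. The only cosmetic difference is in $X_N$: you first describe the join as $\{(\alpha,\beta):\alpha-\beta\in(I-T_N)[e_{N-1}^\perp]\}$ and then solve $(I-T_N^*)w\parallel e_{N-1}$, whereas the paper takes $(x,-x)\perp r_N$ and imposes orthogonality to the spanning vectors $(e_i,e_{i+1})$ of $q_N\cap t_N^\perp$. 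Both reduce to $w_i=w_{i+1}$, and your uniform argument also covers $N=1$ without the paper's separate check.
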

\begin{proof}
An inner-product calculation gives
$q_N^\perp=\{(-T_N^*y,y):y\in H_N\}$.
Its intersection with $p_N^\perp$ forces $T_N^*y=0$;
the kernel is $\R e_0$, so the intersection is $\R(0,e_0)$.
Projection to $r_N$ sends $(0,e_0)$ to $(e_0/2,e_0/2)$,
then projection to $p_N$ gives $(e_0/2,0)$.
Thus $a_N=\R(e_0,0)$.
The kernel $\ker T_N=\R e_{N-1}$ gives $t_N=p_N\cap q_N=\R(e_{N-1},0)$.

The graph vectors orthogonal to $t_N$ are spanned by
$(e_i,e_{i+1})$ for $i<N-1$.
A vector orthogonal to $r_N$ is $(x,-x)$.
Orthogonality to those graph vectors says $x_i=x_{i+1}$.
Thus $((q_N\cap t_N^\perp)+r_N)^\perp=\R(u_N,-u_N)$,
whose projection to $p_N$ is $\R(u_N,0)$.
For $N=1$ the adjacency conditions are empty, $T_1=0$,
and $a_1=t_1=X_1=p_1=q_1$, so the formulas remain valid.

Since $\|u_N\|^2=N$ and $\langle e_{N-1},u_N\rangle=1$,
\begin{equation}\label{eq:scalar}
               P_{a_N}P_{X_N}(e_{N-1},0)=(e_0/N,0).
\end{equation}
The coefficient is nonzero, so the image of the terminal line is
exactly $a_N$. Equation~\eqref{eq:projection-image} gives the Sasaki
identity at every coordinate and hence in the quotient.
The line $a_N$ is nonzero at every coordinate, so $a\ne0$.
\end{proof}

\begin{remark}
Equation~\eqref{eq:scalar} identifies a subspace image. The quotient uses
eventual equality of subspaces, not a norm limit; $1/N$ does not
make the quotient line zero.
\end{remark}

\begin{lemma}\label{lem:infinite}
The source $L$ is countably infinite.
\end{lemma}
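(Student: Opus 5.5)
Countability is already established: $L$ consists of finite terms over the finite alphabet $\{p,q,r\}$, so it is at most countable. It remains to show that $L$ is infinite. The plan is to exhibit an explicit infinite family of pairwise distinct elements of $L$, each given by a finite term. The natural candidates are iterates of Sasaki projections applied to the terminal line $t$. In coordinate $N$, $t_N=\R(e_{N-1},0)$. Using \eqref{eq:projection-image}, I will compute a word $w$ in $p,q,r$ that shifts this line one step down the basis. Concretely, I expect $\pi_p\pi_q\pi_r$, or a variant such as $\pi_p\pi_r\pi_q\pi_r$, to send $\R(e_{i},0)$ to $\R(e_{i-1},0)$, up to a nonzero scalar, for $0<i<N$. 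The adjoint direction is available because $q_N=\gr T_N$ and $T_N^*e_{i}=e_{i-1}$.

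Key steps, in order. First, fix the word $w$ and verify the coordinate computation. Project $(e_i,0)$ to $r_N$ to get $(e_i,e_i)/2$. Project onto $q_N$, whose basis vectors are $(e_j,e_{j+1})$ for $j<N-1$ together with $(e_{N-1},0)$; the inner products show that the image is a combination of $(e_{i-1},e_i)$ and $(e_i,e_{i+1})$. Then project back through $p_N$ and check which basis lines survive. If a single word gives a messy mixture rather than a pure shift, I will instead use meets with previously obtained lines, or with $t$ and $a$, to isolate lines. Second, define $t^{(k)}=w^k(t)$. Show that $t^{(k)}_N=\R(e_{N-1-k},0)$ for $N>k$; at least, $t^{(k)}_N$ should be a nonzero line lying in a span that is controlled by $k$.

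Third, distinctness in the quotient. For $k\ne l$, the lines $t^{(k)}_N$ and $t^{(l)}_N$ are distinct orthogonal lines for all sufficiently large $N$. Hence by \eqref{eq:quotient-order} the classes differ, and $L$ has infinitely many elements.

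The main obstacle is the first step, finding a word whose coordinate action is clean enough to track exactly. If no clean shift exists, a fallback suffices. It is enough that $\dim w^k(t)_N$ or the support of $w^k(t)_N$ eventually stabilizes to a value depending injectively on $k$. Alternatively, show that the elements $(w^k(t))_N$ have strictly increasing spans $\spn\{(e_j,0):N-1-k\le j\le N-1\}$ by taking joins $t\vee w(t)\vee\cdots\vee w^k(t)$. Their dimensions, $k+1$ at large $N$, then give pairwise distinct classes. A dimension argument of this kind is robust to scalar mixing, so I would adopt it if the pure shift fails.
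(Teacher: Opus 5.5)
Your headline step does not work as stated: neither $\pi_p\pi_q\pi_r$ nor $\pi_p\pi_r\pi_q\pi_r$ is a pure downward shift. By \eqref{eq:qprojection}, $\pi_p\pi_q\pi_r$ acts on first-axis lines as $\R(x,0)\mapsto\R(Ax,0)$, where $(Ax)_j=(x_j+x_{j+1})/4$ for $j<N-1$ and $(Ax)_{N-1}=x_{N-1}/2$. Thus $\R(e_i,0)\mapsto\R(e_{i-1}+e_i,0)$ and $t_N\mapsto\R(e_{N-2}+2e_{N-1},0)$. The word $\pi_p\pi_r\pi_q\pi_r$ gives $\R(e_{i-1}+2e_i+e_{i+1},0)$.

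No other word in $\pi_p,\pi_q,\pi_r$ does better. All three projections have nonnegative matrices in the standard basis, and on nonnegative vectors none of them shrinks the first-component support. So every orbit line $w(t)_N$ is spanned by a nonnegative vector with positive $(e_{N-1},0)$-coefficient. Hence your $t^{(k)}_N=\R(e_{N-1-k},0)$ and your ``distinct orthogonal lines'' are false for every choice of word. The paper's word $F=\pi_p\pi_r\pi_{p'}\pi_q$ avoids exactly this mixing. Its factor $\pi_{p'}$ deletes the first component after the graph step, giving the clean upward shift $F^n(a)_N=\R(e_n,0)$ for $N>n$, so distinctness is immediate.

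Your fallback, however, is a correct proof, and you should make it the main argument. Since $A$ is upper bidiagonal with positive entries, $A^ke_{N-1}$ has nonnegative coefficients. For $N>k$ its support is exactly $\{N-1-k,\dots,N-1\}$, with coefficient $4^{-k}$ at the lowest index. Hence for $k\ne l$ the lines $w^k(t)_N$ and $w^l(t)_N$ differ for every $N>\max(k,l)$, so the classes differ. Equivalently, $J_k=t\vee w(t)\vee\cdots\vee w^k(t)$ has $(J_k)_N=\spn\{(e_j,0):N-1-k\le j\le N-1\}$, and $J_k\wedge J_{k-1}'$ even recovers the pure lines $\R(e_{N-1-k},0)$.

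Your claim ``robust to scalar mixing'' is not automatic. It is this positivity and triangularity that rules out cancellation, and you need to state and check it.

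Compared with the paper, your route stays inside the orbit $O$ of Lemma~\ref{lem:allwords} and never uses $p'$. It therefore shows that the very family whose supremum drives Theorem~\ref{thm:obstruction} is infinite. The price is tracking $N$-dependent mixed supports rather than fixed basis lines.
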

\begin{proof}
Put $F(x)=\pi_p\pi_r\pi_{p'}\pi_q(x)$.
For $i<N-1$, the successive images of $\R(e_i,0)$ are
\[
 \R(e_i,e_{i+1}),\quad\R(0,e_{i+1}),\quad
 \R(e_{i+1},e_{i+1}),\quad\R(e_{i+1},0).
\]
All intermediate coefficients are nonzero. The terminal first-axis
line is fixed by $\pi_q$ and killed by $\pi_{p'}$.
Hence $F^n(a)_N=\R(e_n,0)$ for $N>n$, and is zero for $N\le n$.
For fixed $m\ne n$, these lines differ for every $N>\max(m,n)$,
so their classes differ. All belong to the already countable $L$.
\end{proof}

\subsection{All finite words and their support}
Put $S=\{p,q,r\}$ and $O=\{w(t):w\in S^*\}$, and set
\[
 K_N(\ell)=\spn\{(e_i,0),(0,e_i):\ell\le i<N\}\quad(0\le\ell\le N).
\]
Coordinate word values use the corresponding parameters $p_N,q_N,r_N$
in the same fixed word.
We have $t_N\subseteq K_N(N-1)$.
The projections
$P_{p_N}(x,y)=(x,0)$ and
$P_{r_N}(x,y)=((x+y)/2,(x+y)/2)$ preserve every $K_N(\ell)$.
Writing $P_{q_N}(x,y)=(z,T_Nz)$, orthogonality of the residual to
the graph gives $(I+T_N^*T_N)z=x+T_N^*y$.
The inverse has diagonal entries $1/2,\ldots,1/2,1$, so
\begin{equation}\label{eq:qprojection}
 z_i=(x_i+y_{i+1})/2\ (i<N-1),\qquad z_{N-1}=x_{N-1}.
\end{equation}
The second component is zero at index $0$ and equals $z_{j-1}$ at $j\ge1$.
If $(x,y)\in K_N(\ell)$, then $z_i=0$ for $i<\ell-1$,
and $T_Nz$ has no support below that bound. Therefore
$P_{q_N}[K_N(\ell)]\subseteq K_N(\max(0,\ell-1))$.
This is a statement for every vector, including $\ell=0$ and $N=1$.

\begin{lemma}\label{lem:allwords}
For every word $w\in S^*$ of length $k$ and every $N\ge1$,
$w(t_N)\subseteq K_N(\max(0,N-1-k))$.
Consequently $w(t)\le a'$ for every fixed $w$.
\end{lemma}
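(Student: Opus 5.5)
The plan is induction on the length $k$ of $w$, working coordinatewise, followed by reading off the consequence in the quotient order \eqref{eq:quotient-order}. For $k=0$ the empty word gives $w(t_N)=t_N=\R(e_{N-1},0)\subseteq K_N(N-1)$ by Lemma~\ref{lem:coordinates}. For the inductive step write $w=s\,w_0$ with $s\in S$ and $w_0$ of length $k-1$. By the induction hypothesis, $w_0(t_N)\subseteq K_N(\ell_0)$ with $\ell_0=\max(0,N-k)$. Equation~\eqref{eq:projection-image} identifies the coordinate Sasaki projection with the orthogonal image, so $w(t_N)=P_{s_N}[w_0(t_N)]\subseteq P_{s_N}[K_N(\ell_0)]$.

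The three generators are handled separately:
\begin{itemize}
\item For $s=p$ or $s=r$, the displayed formulas for $P_{p_N}$ and $P_{r_N}$ preserve $K_N(\ell_0)$.
\item For $s=q$, the support computation \eqref{eq:qprojection} gives $P_{q_N}[K_N(\ell_0)]\subseteq K_N(\max(0,\ell_0-1))$.
\end{itemize}
In every case the image lies in $K_N(\max(0,\ell_0-1))$, because the spaces $K_N(\ell)$ decrease in $\ell$. Since $\max(0,\max(0,N-k)-1)=\max(0,N-1-k)$, the induction closes. One point needs care: the word value in the quotient is computed coordinatewise. This holds because the Sasaki projection is a finite term and the operations on $M$ are coordinatewise, so $w(t)$ is the class of $(w(t_N))_N$ with the parameters $p_N,q_N,r_N$.

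For the consequence, fix $w$ of length $k$ and take $N\ge k+2$. Then $N-1-k\ge1$, so $w(t_N)\subseteq K_N(1)$, and every vector of $K_N(1)$ has zero $e_0$-coefficient in both components. By Lemma~\ref{lem:coordinates} we have $a_N=\R(e_0,0)$, so $K_N(1)\subseteq a_N^\perp$ and hence $w(t_N)\subseteq a_N^\perp$. This holds for all sufficiently large $N$, so by \eqref{eq:quotient-order} we get $w(t)\le a'$ in $M_F$. Since $L$ is a subOML carrying the inherited order and complement, the same inequality holds in $L$.

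The only delicate step is the $q$-case, that is, checking that \eqref{eq:qprojection} lowers the support bound by at most one. This includes the boundary cases $\ell_0=0$, $\ell_0=N$ (where $K_N(N)=0$ trivially) and $N=1$. The paper's preceding paragraph already asserts this for every vector, so I would only recheck that the index $y_{i+1}$ contributes to $z_i$ solely when $i+1\ge\ell_0$, that is, $i\ge\ell_0-1$, and that $T_Nz$ shifts the support upward.
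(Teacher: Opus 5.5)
Your proposal is correct and follows essentially the same route as the paper's proof. Both argue by induction on word length: $P_{p_N}$ and $P_{r_N}$ preserve each $K_N(\ell)$, and $P_{q_N}$ lowers the support bound by at most one; then both take $N\ge k+2$ and apply the quotient order~\eqref{eq:quotient-order}. Your arithmetic check $\max(0,\max(0,N-k)-1)=\max(0,N-1-k)$ and your remark that word values are computed coordinatewise just spell out steps the paper leaves implicit.
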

\begin{proof}
The empty word has value $t_N$.
An outer $p$ or $r$ preserves the current lower support bound,
and an outer $q$ lowers it by at most one.
Induction on length proves the inclusion for all $N$.
If $N\ge k+2$, the lower support bound is at least $1$, so the
whole subspace is orthogonal to $a_N=\R(e_0,0)$.
Equation~\eqref{eq:quotient-order} now gives $w(t)\le a'$.
\end{proof}

The quantifiers are $\forall w\,\exists N_w\,\forall N\ge N_w$,
with $N_w=|w|+2$. No common tail or countable intersection of
cofinite tails is used. The separate infinitude term $F$ uses $p'$,
but $p'$ is not added to this orbit's alphabet.

\section{Nonembedding and the \texorpdfstring{$\sigma$}{sigma} conventions}
\begin{proof}[Proof of Theorem~\ref{thm:main}]
The constructed $L$ is an actual set-sized, countably infinite
OML generated by $p,q,r$.
Lemmas~\ref{lem:coordinates} and~\ref{lem:allwords} discharge all source
assumptions of Theorem~\ref{thm:obstruction}.
Every homomorphism into a $\sigma$-complete OML therefore kills $a\ne0$,
so is not injective. Every complete OML supplies all countable suprema,
which gives the complete-target conclusion.
\end{proof}

\begin{lemma}\label{lem:orthogonal-sigma}
Countable pairwise orthogonal suprema in an OML imply all countable suprema.
\end{lemma}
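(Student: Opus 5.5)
Given a countable family $\{x_n:n\in\N\}$ in an OML in which every countable pairwise orthogonal family has a supremum, the plan is to replace it by an orthogonal sequence with the same upper bounds. I will use the standard Gram--Schmidt style orthogonalization available from orthomodularity: put $j_0=0$ and $j_{n+1}=j_n\vee x_n$ (finite joins, which always exist), so $j_0\le j_1\le\cdots$ is an increasing chain with the same upper bounds as $\{x_n\}$. Then define $d_n=j_{n+1}\wedge j_n'$. The $d_n$ are pairwise orthogonal: for $m<n$ we have $d_m\le j_{m+1}\le j_n\le d_n'$. The remaining work is to show that $d=\bigvee_n d_n$, which exists by hypothesis, is the supremum of the $x_n$.

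First I will prove by induction that $j_n=d_0\vee\cdots\vee d_{n-1}$. This uses \eqref{eq:om} applied to $j_n\le j_{n+1}$, which gives $j_{n+1}=j_n\vee(j_{n+1}\wedge j_n')=j_n\vee d_n$. Consequently every upper bound of $\{d_n\}$ bounds every $j_n$ and hence every $x_n$, and conversely. So the two families have exactly the same upper bounds, and the least one, $d$, is $\bigvee_n x_n$.

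This step was the one I expected to be delicate, namely that the upper bounds really coincide rather than merely $d$ being an upper bound. The identity $j_{n+1}=j_n\vee d_n$ settles it directly, so no infinite distributivity is needed. I will also note that the finite family case is trivial, and that repetitions or zero terms cause no problem, since $d_n=0$ is allowed in an orthogonal family. If the convention demands distinct elements, I will discard zeros and duplicates, which does not change the supremum. Hence such a target is $\sigma$-complete in the paper's sense, and Theorem~\ref{thm:obstruction} applies under either convention.
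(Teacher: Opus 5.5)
Your proposal is correct and is essentially the paper's proof. Your $j_{n+1}$ is the paper's $c_n=x_0\vee\cdots\vee x_n$, so your $d_n$ coincide with its $d_n$, including $d_0=x_0$. Both arguments use orthomodularity to get $j_{n+1}=j_n\vee d_n$, obtain pairwise orthogonality from $d_m\le j_n\le d_n'$ for $m<n$, and conclude that the families $\{x_n\}$ and $\{d_n\}$ have exactly the same upper bounds. Your handling of zeros and repetitions matches the paper's remark on the orthogonal-family convention.
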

\begin{proof}
Given $x_0,x_1,\ldots$, put $c_n=x_0\vee\cdots\vee x_n$,
$d_0=c_0$, and $d_n=c_n\wedge c_{n-1}'$ for $n\ge1$.
Orthomodularity gives $c_n=c_{n-1}\vee d_n$ and, inductively,
$c_n=\bigvee_{i\le n}d_i$.
For $m<n$, $d_m\le c_{n-1}$ and $d_n\le c_{n-1}'$, so the $d_n$
are pairwise orthogonal.
Let $d=\bigvee_n d_n$. Every $c_n$ and $x_n$ lies below $d$.
Every upper bound of the $x_n$ bounds all $c_n$ and $d_n$, hence is
above $d$. Thus $d$ is the original sequence's supremum.
Zeros may be removed if the orthogonal-family convention excludes them.
Empty and finite cases use $0$ and finite joins.
\end{proof}

The target lemmas used only the OML axioms and actual leastness in the
target. Euclidean geometry was used solely to construct the source.
Target atomicity, modularity, Hilbert representation, and a target
cardinality bound are not assumptions.

\section{Relation to the cited literature}\label{sec:literature}
The cited survey's problem statements refer to version~1 dated
22 August 2021~\cite{HardingWang2021v1}. They are historical problem
statements, not an exhaustive survey through the present date.
No priority claim is made here.

Harding~\cite[Theorem~6.1]{Harding2002} proves that every countable
OML embeds into a three-generated OML. The generating-set size alone
is therefore not the distinctive feature of this argument; the
explicit terms and projection-word obstruction are essential.

Bruns and Harding~\cite[Theorem~5]{BrunsHarding1998} prove strong
Boolean amalgamation for OMLs, discussed in Theorem~3.17 and
Remark~3.18 of the cited survey version.
The theorem supplies finite-operation embeddings. Its strong
intersection condition is not arbitrary-supremum preservation.
The remark discusses amalgamation with a Boolean completion of a block
and explicitly notes the difficulty of preserving earlier joins
during iteration.

A Boolean subalgebra complete in its own induced order need not retain
its infinite joins in the ambient algebra. With a nonprincipal ultrafilter $U$ on
$\N$, define
\[
 h(A)=A\cup\{\infty\}\quad(A\in U),\qquad h(A)=A\quad(A\notin U).
\]
This is an injective bounded Boolean homomorphism from $\mathcal P(\N)$
to $\mathcal P(\N\cup\{\infty\})$.
Indeed, ultrafilter membership preserves finite intersections, finite
unions, and complements; $h(A)\cap\N=A$ proves injectivity.
The ambient supremum of $h(\{n\})=\{n\}$ is $\N$, whereas
$h(\N)=\N\cup\{\infty\}$.
The example distinguishes internal from ambient leastness; it is
not claimed to be the cited construction's particular amalgam.

The original amalgamation proof enlarges the components and their
common Boolean algebra before constructing its final normal ideals.
The stated theorem supplies no regularity bridge for the original
completed Boolean component. We do not assert that the final
normal-ideal embedding loses joins already existing in an enlarged
component, and do not assign a precise universal quantification to
the survey's informal final comment about inserting a particular join.
The obstruction uses the actual ambient supremum of $f[O]$,
which a Boolean component's internal join alone does not supply.

\section{Verification, provenance, and separate questions}\label{sec:status}
\paragraph{Internal paper reconstruction.}
The manuscript was assembled from the saved source construction and
expanded orbit obstruction. Separate executions in the same AI
environment reconstructed the source calculations and target algebra.
A full reconstruction also checked all source assumptions, the target
lemmas, both $\sigma$ conventions, and the expanded finite-term argument,
reporting no required mathematical repair. These are provenance labels,
not premises of the proof. The English manuscript also receives a check
against those source documents.

\paragraph{Formal and external verification; novelty.}
No Lean or other proof assistant verification has been performed.
No authenticated independent external review has been obtained.
Internal checking does not establish novelty or priority, and the
limited source comparison is not an exhaustive literature survey.
These questions are separate from the explicit paper derivation.

\paragraph{Historical repair congruence.}
An earlier positive-completion route studied the congruence
\[
                 \rho=\operatorname{Cg}_{Q_{\mathrm{pre}}}(d_0,0)
\]
on a separately constructed precursor and its restriction to an earlier source.
The full restriction remains unresolved in the saved research record.
Neither the precursor, the congruence, nor a free $\sigma$-envelope is
used in Theorem~\ref{thm:main}.
The conclusion $f(a)=0$ does not determine that congruence, whose
quotient is not known to supply the required orbit supremum.
No historical congruence conclusion is asserted here.

\paragraph{Alternative quotients.}
The fixed-word cofinite relations also pass to every nonprincipal
ultraproduct of the factors, so the same argument applies individually
to the corresponding generated subOML.
No isomorphism between the cofinite and ultraproduct sources is required
or asserted. Distinct identities of designated generators alone would
not establish nonisomorphism of the unlabelled lattices.

\paragraph{AI-use disclosure.}
The author had obtained the conclusion of this paper before the present
use of AI. Codex was used to check the theory and to prepare and
proofread the manuscript. Python was used for computational checks
over a finite range.

\appendix
\section{Bounded computation and reproducibility}
The proof is the all-$N$ coordinate calculation and induction on every
fixed word. A supplementary exact rational calculation used generic
subspace meet, join, orthogonal complement, and Sasaki operations with
Fraction/RREF arithmetic for $1\le N\le8$ and all $p,q,r$ words of
lengths $0$ through $5$.
It checked $8(1+3+3^2+3^3+3^4+3^5)=2912$ word instances, the line
identity, its coefficient $1/N$, and endpoint cases of the infinitude
term, with no reported errors. This finite check is not the proof
of the universal statements.

The local research archive includes the manuscript, bibliographic
identifier validation and version notes, internal manuscript checking,
build and layout records, source hashes, and underlying proof documents.
The archive distinguishes local preparation from any subsequent
arXiv submission and contains no formal-verification certificate.
Identifier validation establishes cited-record
existence, not the correctness of theorem application.

\end{document}